\newtheorem{theorem}{Theorem}[subsection]
\newtheorem*{theorem*}{Theorem}
\newtheorem{definition}[theorem]{Definition}
\newtheorem{lemma}[theorem]{Lemma}
\title{Convergence of  Taylor Series of real normed division algebras.}
\author{Eric Dolores}
\date{\today}
\begin{document}
\begin{abstract}We propose a new way to compute the radius of convergence for Quaternionic hyperholomorphic functions  and  for Octonion analytic functions.
 We extend the theorem of Cauchy-Hadamard and the theorem of Abel on convergence of series to Quaternionic  analysis with the Fueter Operator,  to Quaternionic  analysis with the Moisil–Th\'eodoresco operator and to the Octonion analysis with the Fueter operator. 
\end{abstract}

\maketitle

\section{Background}
William Rowan Hamilton wanted to define  an operation on $R^3$ with  analogous properties to  multiplication of complex numbers. On October 16, 1846, he found the rules to define an associative product on $R^4$  with  unit so that every non-zero element has an inverse. This product is not commutative. Hamilton called this structure on $R^4$  the Quaternion numbers which we denote as \[\mathbb{H}=\{x_0+ix_1+jx_j+kx_k|(x_0,x_1,x_2,x_3)\in R^4, i^2=j^2=k^2 = -1, ijk=-1\}.\] Quaternions are closely related to rotations on $R^3$ and $R^4$. 

 Dr. Fueter proposed the study of Quaternionic functions that are zeroes of the operator: 

\[D=\frac{\partial}{\partial x_0}+i\frac{\partial }{\partial x_1}+j\frac{\partial }{\partial x_2}+k\frac{\partial }{\partial x_3}.\]
which generalizes the Cauchy Riemann operator in complex analysis \cite{Fueter}. We call this functions hyperholomorphic.  We will study the radius of convergence of power series expansions of hyperholomorphic functions. 

\begin{definition}
The Fueter's basis is given by the hyperholomorphic functions $\zeta_1(x)=x_1-ix_0, \,\zeta_2(x)=x_2-jx_0,\, \zeta_3(x)=x_3-kx_0.$
Where $x=x_0+ix_1+jx_2+kx_3.$ They form the basis of hyperholomorphic polynomials of degree one.
\end{definition}

We write the Taylor expansion of a hyperholomorphic function using  Fueter's Basis:

\begin{eqnarray*}
f(x)&=&\sum_0^\infty \sum_{\stackrel{\nu=(n_1,n_2,n_3)}{n_1+n_2+n_3=n}} P_\nu a_\nu,\end{eqnarray*}
 where $a_\nu\in\mathbb{H} $ for all $n\in N$ and $\nu=(n_1,n_2,n_3), n_1+n_2+n_3=n$   and $P_\nu$ are degree $n$ hyperholomorphic homogeneus polynomials. The polynomials $P_\nu$  are permutational products of the Fueter's basis. For example $2*P_{(1,1,0)}= \zeta_1(x)\zeta_2(x)+\zeta_2(x)\zeta_1(x).$  We will give the full definition on the next section.

On the current literature, such as [\cite{Holo2014}, page 168], the radius of convergence of a hyperholomorphic function is calculated by expanding $\|f(x)\|$ as sum of monomial terms: 
 \begin{eqnarray*}
\|f(x)\|&\leq&\sum_0^\infty \sum_{\stackrel{\nu=(n_1,n_2,n_3)}{n_1+n_2+n_3=n}} \|P_\nu\|\| a_\nu\|,\\
&\leq&\sum_{n=0}^\infty \sum_{v_0+v_1+v_2+v_3=n}  |x_0^{v_0}x_1^{v_1}x_2^{v_2}x_3^{v_3}| \|a_{(v_0,v_1,v_2,v_3)}\|,\end{eqnarray*}
Here the coefficients $a_{(v_0,v_1,v_2,v_3)}$ are a linear combination of the coefficients $\{a_\nu\}_{\|\nu\|=n}$. Then we apply the formula \[ \rho = \lim_{n \rightarrow \infty} (\sum_{\|(v_0,v_1,v_2,v_3)\|=n}{\|a_{(v_0,v_1,v_2,v_3)}}\|^{1/n})^{-1}\] to guarantee that $f$  converges on $\{x\in\mathbb{H}|\,\,\|x\|<\rho\}$.

 In this paper we calculate a different radius of convergence $\rho^\prime.$ We use the Taylor expansion of the function in terms of the Fueter basis. We introduce an auxiliary  norm $\|\cdot\|^\prime$ that determines the region of convergence of the function $f$.
 
 The main idea here is that since we expand the homogenous polynomials in the Fueter basis, we also use a norm related to that basis. This norm help us understand hyperholomorphic functions better. To support this idea we use the norm $\|\cdot\|^\prime$  to extend Abel, and Cauchy-Hadamard theorems on convergence of complex series to the Quaternoinic case. 

 The non commutativity of Quaternions set the shape of the Taylor expansions.   We will used the fact that the norm $\|\cdot\|$ is multiplicative to bound the Taylor expansion in terms of the norm $\|\cdot\|^\prime$. We will never worry about commutativity since we will work all the time with real numbers. 
 The same ideas are applied to analytic functions on Octonions. The reason is that Octonions and Quaternions are finite real division algebras that accept a  multiplicative norm.  

 Finally we also show equivalent theorems for the zeroes of the Quaternionic Moisil-Theodoresco operator. 
 
 As far as the author is aware, this is the first time those theorems are extended to Quaternions and Octonions. A weak version of Abel lemma, which is closely related, has been proved in [\cite{Malonek1990}, equation (18)] using estimations in terms of $\|\zeta_i\|$.

The paper is organized as follows. In section \ref{Intro} we describe the auxiliary norm $\|\cdot\|^\prime$ and  its general properties.
 In section  \ref{Radio} we introduce the radius of convergence $\rho^\prime$ and we prove the Quaternionic version of Abel and Cauchy-Hadamard theorems.
In section \ref{Comparison} we study relationships of our radius of convergence with other definitions. 
In section \ref{O} we extend the theorems to Octonion analytic functions.  In section \ref{MT} we study the case of the Mosil Teodoresco operator and the Octonions. In the final section we discuss open questions.

\section{Introduction}
\label{Intro}
In this section we describe the Taylor series expansion of a hyperholomorphic function. We introduce a norm $\|\cdot\|^\prime$; we use the norm to find a bound on the homogeneous term of degree $n$ of the Taylor series expansion. In the next section we will use this bound to describe the radius of convergence.

 \begin{definition}
A function $f:\mathbb{H}\rightarrow\mathbb{H}$ is said to be (left) hyperholomorphic in a neighborhood $V$ of the origin, if $f$ is real differentiable on $V$ and if $Df=0$ when $D$  is the Cauchy-Riemann-Fueter operator \[D=\frac{\partial}{\partial x_0}+i\frac{\partial }{\partial x_1}+j\frac{\partial }{\partial x_2}+k\frac{\partial }{\partial x_3}.\]
\end{definition}

We study the following Taylor expansion of a (left) hyperholomorphic function $f$ at the origin given in terms of non commutative polynomials

\begin{eqnarray*}
f(x)&=&\sum_0^\infty \sum_{\stackrel{\nu=(n_1,n_2,n_3)}{n_1+n_2+n_3=n}} P_\nu a_\nu,\end{eqnarray*}
 where 
\[P_\nu=\frac{1}{n!}\sum_{(i_1,\cdots,i_n)\in A_\nu} \zeta_{i_1}\cdots \zeta_{i_n},\] and the sum is over $\vec{\nu}\in A_\nu$, $A_\nu$ is the set of all possible words with  $n_1$ letters $1$, $ n_2$ letters $2$ and $n_3$ letters $3$, see \cite{Malonek1990}.

We propose the use of the following auxiliary norm to analyse the Taylor expansion of $f$.

\begin{definition}
We denote by $\|\,\cdot\,\|^\prime:\mathbb{H}\rightarrow \mathbb{R}$ the norm 
\[||x_0+ix_1+jx_2+kx_3||^\prime=\max\{||x_0+ix_1||,||x_0+jx_2||,||x_0+kx_3||\},\]
where $||r+ls||=\sqrt{r^2+s^2}, l\in\{i,j,k\}$  is the euclidean distance.
\end{definition}

 The corresponding regions  \[B(0,r):=\{x\in \mathbb{H}|\,(\sum_0^3 x_i^2)^{1/2}<r\},  B^\prime(0,r):=\{x\in \mathbb{H}|\,||x||^\prime<r\}\] satisfy  \[B(0,r)\subset B^\prime(0,r).\] 
 
For example $.9i+.9j\in B^\prime(0,1)$ but $.9i+.9j\not\in B(0,1).$
The shape of  $B^\prime(0,r)$ is a 4 dimensional object whose 3-dim boundary has cubes and poly-cylinders.
For purely imaginary values, we get \[ ||x_1i+x_2j+x_3k ||^\prime=max\{{|x_1|,|x_2|,|x_3|}\}\] while for complex numbers, we get $ ||x_0+x_1i ||^\prime=\sqrt{x_0^2+x_1^2}$.

We compute
\begin{eqnarray}
\hbox{ volume }(B^\prime(0,r))&=&16\int^r_{t=0}(r^2-t^2)^{3/2}dt\\
&=&6/\pi*\hbox{ volume }(B(0,r))
\end{eqnarray}

From the following examples: \begin{eqnarray*}
||i(1+2j)||^\prime&=&
||i+2k||^\prime\\
&=&2\\
&<&||i||^\prime||1+2j||^\prime\\
&=&\sqrt{5}\\
&=&||1+2j||^\prime\\
&=&||i^{-1}(i+2k)||^\prime\\
&>&||i^{-1}||^\prime||1+2k||^\prime\\
&=&2
\end{eqnarray*}

we conclude that in general
$||xy ||^\prime$ cannot be compared with $||x ||^\prime||y ||^\prime$ as $||i(1+2j)||^\prime<||i||^\prime||1+2j||^\prime$ and $||i^{-1}(i+2k)||^\prime  > ||i^{-1}||^\prime||1+2k||^\prime$. This won't affect our calculations because the usual norm $||\cdot||$ is multiplicative, and we will only use  the  norm $\|\cdot\|^\prime$ to quantify terms on the homogeneous polynomials $P_\nu$ as we explain below.

The usual method to compute the radius of convergence considers   
\[\|\zeta_{i_1}\cdots \zeta_{i_n}\|\leq  (\|x_{i_1}\|+\|x_{i_0}\|)\cdots (\|x_{i_1}\|+\|x_{i_0}\|)\leq 2^n \|x\|^n,\] by using that $\|x_{i_0}\|<\|x\|$, instead  we skip the triangle inequality by considering

\begin{eqnarray}
\|\zeta_{i_1}\cdots \zeta_{i_n}\|&=&\|\zeta_{i_1}\|\cdots \|\zeta_{i_n}\|\\
&\leq& (\|x\|^\prime)^n.\label{Bound}
\end{eqnarray}

Going from the Fueter variables to the real variables always adds a $2^n$ coefficient. Now we will use this norm $\|\cdot\|^\prime$ to bound the value of $\|f(x)\|$.

Given $\nu=(n_1,n_2,n_3)$ we define $A_\nu$ as the set of all possible words $\vec{\nu}$ with $n_1$ letters $1$, $n_2$ letters $2$, and $n_3$ letters $3$. To each word $\vec{\nu}\in A_\nu$ we consider a product of the Fueter basis where the letters of  $\vec{\nu}$ indicate the position of the  corresponding function  $\zeta_1,\zeta_2,\zeta_3$.
 $ A_\nu$ keeps track of the terms in $P_\nu$.

\begin{definition}
Given the series

$$f(x)=\sum_{n=0}^\infty \sum_{\stackrel{\nu=(n_1,n_2,n_3)}{n_1+n_2+n_3=n}} P_\nu a_\nu,$$
 We define $N(f)$ by 
\[N(f)(x)=\lim_{N\rightarrow\infty} \sum_{n=0}^N \sum_{\stackrel{\nu=(n_1,n_2,n_3)}{n_1+n_2+n_3=n}} ||\frac{a_\nu}{n!}|| \sum_{\stackrel{\vec{\nu}=(i_1,\cdots, i_n)}{\vec{\nu}\in A_{\nu}}} || \zeta_{i_1}\cdots \zeta_{i_n}(x)||.\]
 \end{definition}

 In the remaining of the section we will estimate upper bounds for $N(\sum_{\stackrel{\nu=(n_1,n_2,n_3)}{n_1+n_2+n_3=n}} P_{\nu}a_\nu ).$

Since  $A_{\nu}$, has  $\binom{n}{n_1\,n_2\,n_3}$ possible words we have

\begin{eqnarray}
  N(n!P_{(n_1,n_2,n_3)})(x)&=&\sum_{(i_1,\cdots,i_n)\in A_\nu} || \zeta_{i_1}\cdots \zeta_{i_n}(x)||\\ 
  &\leq&    \sum_{(i_1,\cdots,i_n)\in A_\nu} 
(||x||^{\prime })^{n_1+n_2+n_3}\\ 
  &=&   (||x||^{\prime })^{n_1+n_2+n_3} \binom{n_1+n_2+n_3}{n_1,n_2,n_3},\label{Bound2}
\end{eqnarray}
the inequality comes from \eqref{Bound}.

With help of the norm $\|\cdot\|^\prime$ we make the estimation:

\begin{eqnarray}
  N(\sum_{\stackrel{\nu=(n_1,n_2,n_3)}{n_1+n_2+n_3=n}} P_{\nu}a_\nu )(x)&\leq& \frac{(||x||^{\prime })^{n}}{n!}\sum_{\stackrel{\nu=(n_1,n_2,n_3)}{n_1+n_2+n_3=n}}   \binom{n}{n_1,n_2,n_3} \|a_\nu\|.\label{Bound3}
\end{eqnarray}

The main point of the theorems in  the paper is to show different conditions to bound the right side of expression \eqref{Bound3}.

\section{Convergence Theorems}\label{Radio}
In this section we will show how $\|\cdot\|^\prime$ can be used to describe the radius of convergence of a  Taylor expansion of a hyperholomorphic function. 
This section includes the proof of the main theorems.

We need an auxiliary lemma to guarantee compactly convergence of the Taylor Series.

\begin{lemma} \label{Aux} Let $f(x)=\sum_0^\infty \sum_{\stackrel{\nu=(n_1,n_2,n_3)}{n_1+n_2+n_3=n}} P_\nu a_\nu.$
 If $ N(f)(x)<\infty$ then $f$ converges compactly on
\[\left\{x|\,\,\,||\zeta_1(x)||\leq||\zeta_1(h)||,||\zeta_2(x)||\leq||\zeta_2(h)||,||\zeta_3(x)||\leq||\zeta_3(h)|| \right\}.\]
 \end{lemma}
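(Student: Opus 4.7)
The plan is to use the multiplicativity of the standard quaternion norm $\|\cdot\|$ to rewrite $N(f)$ as an explicit triple series in $\|\zeta_1(x)\|$, $\|\zeta_2(x)\|$, $\|\zeta_3(x)\|$, observe that this series is monotone in each of these three nonnegative quantities, and then apply the Weierstrass M-test on the region
\[ K := \{x \in \mathbb{H} : \|\zeta_i(x)\| \le \|\zeta_i(h)\|,\ i=1,2,3\}. \]

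First, since $\|\cdot\|$ is multiplicative on $\mathbb{H}$, every word $(i_1,\dots,i_n)\in A_\nu$ contributes the same factor
\[ \|\zeta_{i_1}(x)\cdots\zeta_{i_n}(x)\| = \|\zeta_1(x)\|^{n_1}\|\zeta_2(x)\|^{n_2}\|\zeta_3(x)\|^{n_3}, \]
independently of the ordering. Collecting the $\binom{n}{n_1,n_2,n_3}$ identical summands yields
\[ N(f)(x) = \sum_{n=0}^\infty \frac{1}{n!}\sum_{n_1+n_2+n_3=n}\binom{n}{n_1,n_2,n_3}\|a_\nu\|\,\|\zeta_1(x)\|^{n_1}\|\zeta_2(x)\|^{n_2}\|\zeta_3(x)\|^{n_3}. \]
All coefficients are nonnegative, so $N(f)(x)$ is monotone increasing in each of $\|\zeta_1(x)\|, \|\zeta_2(x)\|, \|\zeta_3(x)\|$. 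Therefore, for every $x \in K$, one has $N(f)(x) \le N(f)(h)<\infty$, with the bound uniform in $x$ across $K$.

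Next, I would bound the Taylor series of $f$ by $N(f)$ termwise. The triangle inequality applied inside each $P_\nu$ gives
\[ \|P_\nu(x)a_\nu\| \le \frac{\|a_\nu\|}{n!}\sum_{\vec{\nu}\in A_\nu}\|\zeta_{i_1}(x)\cdots\zeta_{i_n}(x)\|, \]
which is precisely the $(n,\nu)$-summand of $N(f)(x)$. For $x\in K$ each such summand is in turn dominated by its value at $h$, which is independent of $x$, and the family of these majorants sums to $N(f)(h)<\infty$. The Weierstrass M-test then yields absolute and uniform convergence of $\sum_{n,\nu}P_\nu a_\nu$ on $K$, which is in particular compact convergence on $K$.

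The only delicate point in the argument is the legitimacy of the multiplicativity step: it relies crucially on the standard Euclidean norm $\|\cdot\|$ and would fail for the auxiliary norm $\|\cdot\|^\prime$, which was shown above to be non-multiplicative. Once that reduction is in hand, however, the lemma reduces to an elementary M-test majorization, so I do not foresee a genuine obstacle.
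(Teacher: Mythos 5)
Your proof is correct and follows essentially the same route as the paper, which simply invokes the Weierstrass $M$-test (citing Malonek's Theorem 3) without writing out the details. You supply exactly the missing details — multiplicativity of $\|\cdot\|$ to collapse each word to $\|\zeta_1(x)\|^{n_1}\|\zeta_2(x)\|^{n_2}\|\zeta_3(x)\|^{n_3}$, monotonicity of the resulting positive series, and domination by $N(f)(h)$ on the poly-cylinder — and your reading of the hypothesis as $N(f)(h)<\infty$ is the intended one.
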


\begin{proof}
It follows from  Weiestrass $M$-test, see [\cite{Malonek1990}, Theorem 3]. 
\end{proof}

On the next theorems we describe conditions to bound the right side of:

\begin{eqnarray}
  N(\sum_{\stackrel{\nu=(n_1,n_2,n_3)}{n_1+n_2+n_3=n}} P_{\nu}a_\nu )(x)&\leq& \frac{(||x||^{\prime })^{n}}{n!}\sum_{\stackrel{\nu=(n_1,n_2,n_3)}{n_1+n_2+n_3=n}}   \binom{n}{n_1,n_2,n_3} \|a_\nu\|.\label{Bound4}
\end{eqnarray}

\begin{theorem}\label{Abel} (Quaternionic  Abel Theorem) Suppose that there are cons\-tants $r_0, M\in \mathbb{R}, N_0\in \mathbb{R},$ such that for all $n>N_0$ and multi indexes $\nu$ with $||\nu||=n$ we have the bound $||a_\nu||r^n_0\leq M$. Under this hypothesis  the series $f(x)=\sum_0^\infty \sum_{\stackrel{\nu=(n_1,n_2,n_3)}{n_1+n_2+n_3=n}} P_\nu a_\nu
$ converges compactly on $B^\prime(0,r_0).$
\end{theorem}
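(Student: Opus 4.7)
The plan is to combine the hypothesis with bound \eqref{Bound4} to produce a convergent numerical majorant, and then invoke Lemma \ref{Aux} to promote the pointwise bound on $N(f)$ to compact convergence on $B^\prime(0,r_0)$.

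First I would split the series as $f = f_0 + g$, where $f_0 = \sum_{n \leq N_0}(\cdots)$ is a polynomial (hence presents no convergence issue) and $g = \sum_{n > N_0}(\cdots)$ is the tail to which the hypothesis $\|a_\nu\| \leq M r_0^{-n}$ applies. Inserting this bound into the right-hand side of \eqref{Bound4} and using the multinomial identity $\sum_{n_1+n_2+n_3 = n}\binom{n}{n_1,n_2,n_3} = 3^n$ yields
\[
N\Bigl(\sum_{|\nu|=n} P_\nu a_\nu\Bigr)(x) \;\leq\; \frac{M}{n!}\Bigl(\frac{3\|x\|^\prime}{r_0}\Bigr)^n,
\]
so $N(g)(x)$ is dominated by an exponential series and is finite for every $x \in \mathbb{H}$. (The factor $3^n$ makes this estimate rather loose, but it is amply enough for the claimed radius $r_0$.)

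Next I would upgrade this pointwise finiteness to compact convergence on $B^\prime(0,r_0)$. Given a compact $K \subset B^\prime(0,r_0)$, compactness produces $\rho := \sup_{y \in K}\|y\|^\prime < r_0$. I would then choose the dominating point $x^* = \rho i + \rho j + \rho k$, for which a direct computation gives $\|\zeta_\ell(x^*)\| = \rho$ for each $\ell \in \{1,2,3\}$. Any $y \in K$ satisfies $\|\zeta_\ell(y)\| \leq \|y\|^\prime \leq \rho = \|\zeta_\ell(x^*)\|$, so $K$ lies inside the region appearing in Lemma \ref{Aux} at the base point $x^*$. Since $N(f)(x^*) < \infty$ by the previous step, Lemma \ref{Aux} supplies compact convergence on $K$, and since $K$ was an arbitrary compact subset of $B^\prime(0,r_0)$, the theorem follows.

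The only genuine subtlety, and the one I would have to verify carefully, is the compatibility between the polydisk $\{y : \|\zeta_\ell(y)\| \leq \rho,\ \ell = 1,2,3\}$ supplied by Lemma \ref{Aux} and the open ball $B^\prime(0,r_0)$ in which convergence is claimed. This compatibility is immediate from the definition $\|y\|^\prime = \max_\ell \|\zeta_\ell(y)\|$, which identifies $\overline{B^\prime(0,\rho)}$ with precisely the polydisk above; everything else is routine bookkeeping, with the $1/n!$ factor in \eqref{Bound4} swallowing the crude $3^n$ coming from the multinomial estimate.
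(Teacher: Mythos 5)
Your proposal is correct and follows essentially the same route as the paper: substitute the hypothesis $\|a_\nu\|\leq M r_0^{-n}$ into \eqref{Bound4}, use $\sum_{n_1+n_2+n_3=n}\binom{n}{n_1,n_2,n_3}=3^n$ to majorize the degree-$n$ block by $\frac{M}{n!}\bigl(3\|x\|^\prime/r_0\bigr)^n$, sum the resulting exponential series, and invoke Lemma \ref{Aux}. You are merely more explicit than the paper about the final passage from finiteness of $N(f)$ to compact convergence on $B^\prime(0,r_0)$, which the paper leaves implicit.
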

\begin{proof}
Let $x\in \mathbb{H}$ with $||x||^\prime=r< r_0.$ Then  for $n>N_0$ and by using  \eqref{Bound4}:
\begin{eqnarray*}
N (\sum_{\stackrel{\nu=(n_1,n_2,n_3)}{n_1+n_2+n_3=n}}P_\nu a_\nu ) &\leq&\mkern-18mu\mkern-18mu \sum_{\stackrel{\nu=(n_1,n_2,n_3)}{n_1+n_2+n_3=n}} \frac{(||x||^{\prime })^n}{n!}||a_\nu||\binom {n}{ n_1,n_2,n_3}\\
&\leq&\sum_{\stackrel{\nu=(n_1,n_2,n_3)}{n_1+n_2+n_3=n}} \frac{r^n}{ n!} (\frac{M}{r_0^n}) \binom{n}{n_1,n_2,n_3}\\
&\leq& M\frac{3^n}{n!}.
\end{eqnarray*}

Which give us \[N(f)(x)\leq N(\sum_0^{N_0} \sum_{\stackrel{\nu=(n_1,n_2,n_3)}{n_1+n_2+n_3=n}} P_\nu a_\nu)(x)+Me^3.\]
\end{proof}

 The next lemma is well-known, we include it because our norm $\|\cdot\|^\prime$ simplifies the proof.

\begin{lemma}\label{All}
If \[\limsup_{n\rightarrow \infty} (\max_{||\nu||=n}\|a_\nu/n!\|)^{\frac{1}{n}}=0\] then  \[f(x)=\sum_0^\infty \sum_{\stackrel{\nu=(n_1,n_2,n_3)}{n_1+n_2+n_3=n}} P_\nu a_\nu,\]
 converges compactly for all $\mathbb{H}$.
\end{lemma}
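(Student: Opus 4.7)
The plan is to show that on every ball $\{x:\|x\|'\le R\}$ the series $N(f)$ is bounded by a convergent geometric series, after which Lemma~\ref{Aux} (equivalently, the Weierstrass $M$-test) delivers compact convergence; since every compact subset of $\mathbb{H}$ sits inside such a ball, we obtain compact convergence on all of $\mathbb{H}$.

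Concretely, I would fix an arbitrary $R>0$ and write $M_n:=\max_{\|\nu\|=n}\|a_\nu/n!\|$, so that $\|a_\nu\|\le n!\,M_n$ whenever $\|\nu\|=n$. Applying the bound \eqref{Bound4} at any $x$ with $\|x\|'\le R$ gives
\[
N\Bigl(\sum_{\|\nu\|=n} P_\nu a_\nu\Bigr)(x)\;\le\;\frac{R^n}{n!}\sum_{\|\nu\|=n}\binom{n}{n_1,n_2,n_3}\|a_\nu\|\;\le\;R^n M_n\sum_{\|\nu\|=n}\binom{n}{n_1,n_2,n_3},
\]
and the multinomial identity $\sum_{\|\nu\|=n}\binom{n}{n_1,n_2,n_3}=3^n$ reduces this to $(3R)^n M_n$.

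The hypothesis $\limsup_n M_n^{1/n}=0$ then finishes the job: picking $\varepsilon=1/(6R)$, for all large $n$ we have $M_n<\varepsilon^n$, so $(3R)^n M_n<2^{-n}$. Summing over $n$ yields a finite majorant for $N(f)(x)$, uniformly in $x$ on the set $\{x:\|x\|'\le R\}$. Lemma~\ref{Aux} converts this into compact convergence of $f$ on the same set. Letting $R\to\infty$, and noting that any compact $K\subset\mathbb{H}$ is bounded in $\|\cdot\|$ (hence in $\|\cdot\|'$, since the two norms are equivalent on the finite-dimensional space $\mathbb{H}$), we conclude compact convergence on all of $\mathbb{H}$.

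There is no real obstacle here; the point is precisely the one the author advertises, namely that the norm $\|\cdot\|'$ absorbs the $2^n$ factor that would otherwise appear when passing from Fueter variables to real variables. The only place one must be mildly careful is the comparison between $\|\cdot\|$ and $\|\cdot\|'$ when translating ``bounded set'' to ``contained in a $\|\cdot\|'$-ball'', but this is immediate from the inclusion $B(0,r)\subset B'(0,r)$ established earlier.
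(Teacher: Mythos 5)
Your proposal is correct and follows essentially the same route as the paper: both apply the bound \eqref{Bound4}, use $\|a_\nu\|\le n!\,M_n$ together with the multinomial identity $\sum_{\|\nu\|=n}\binom{n}{n_1,n_2,n_3}=3^n$, choose the threshold $1/(6R)$ (the paper uses $1/(6\|x\|^\prime)$ pointwise) to majorize the degree-$n$ block by $2^{-n}$, and conclude via Lemma \ref{Aux}. The only cosmetic difference is that you work uniformly on a ball of radius $R$ and then let $R\to\infty$, whereas the paper fixes a single nonzero point $x$.
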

\begin{proof}

Let $x\in \mathbb{H}-\{0\}.$  There is $N_0\in \mathbb{N}$ such that for all $n>N_0$ \[(\max_{||\nu||=n} ||a_\nu/n!||)^{\frac{1}{n}}\leq\frac{1}{6||x||^\prime}.\]  
Then using \ref{Bound4} we have that for all $n>N_0:$
\begin{eqnarray*}
\sum_{\stackrel{\nu=(n_1,n_2,n_3)}{n_1+n_2+n_3=n}}N (P_\nu a_\nu ) &\leq&\mkern-18mu\mkern-18mu \sum_{n_1+n_2+n_3=n} \frac{||x||^{\prime n}}{(6||x||^\prime)^n}\binom {n}{ n_1,n_2,n_3}\\
&=& \frac{1}{6^n }\sum_{n_1+n_2+n_3=n} \binom{n}{n_1,n_2,n_3}\\
&=& 1/2^n.
\end{eqnarray*}
We conclude that \[N(f)(x)\leq N(\sum_0^{N_0} \sum_{\stackrel{\nu=(n_1,n_2,n_3)}{n_1+n_2+n_3=n}} P_\nu a_\nu)(x)+2.\]
Compactly convergence follows from convergence of $N(f)$ accor\-ding to Lemma \ref{Aux}.
\end{proof}

The following theorem is useful in cases where there is a variation in the magnitude of the coefficients of the homogeneous components of the Taylor series or several of them have absolute values smaller than 1.

\begin{theorem}
(Quaternionic Cauchy-Hadamard Theorem) \label{Final}

Let \[\sigma=\limsup_n(\sum_{\stackrel{\nu=(n_1,n_2,n_3)}{n_1+n_2+n_3=n}} \binom{n}{n_1,n_2,n_3} \|a_{\nu}\|/n!)^{1/n}\] then  \[f(x)=\sum_0^\infty \sum_{\stackrel{\nu=(n_1,n_2,n_3)}{n_1+n_2+n_3=n}} P_\nu a_\nu,\] converges compactly for all $h\in B^\prime(0,\frac{1}{\sigma})$.
\end{theorem}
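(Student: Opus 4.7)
The plan is to use the bound \eqref{Bound4} to majorize $N(f)$ by an ordinary real power series in $\|x\|'$, apply the classical one-variable Cauchy-Hadamard theorem, and then invoke Lemma \ref{Aux} to upgrade pointwise finiteness of $N(f)$ to compact convergence of $f$ on $B'(0,1/\sigma)$.

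First I would set $c_n := \sum_{\nu:\, n_1+n_2+n_3=n}\binom{n}{n_1,n_2,n_3}\|a_\nu\|/n!$, so that the hypothesis reads $\limsup_n c_n^{1/n}=\sigma$. Summing \eqref{Bound4} over $n$ produces the real majorant
\[
N(f)(x)\ \leq\ \sum_{n=0}^\infty (\|x\|')^n\, c_n.
\]
Applying the classical Cauchy-Hadamard theorem to this ordinary real series shows that it converges whenever $\|x\|'<1/\sigma$. Hence $N(f)(x)<\infty$ at every point of $B'(0,1/\sigma)$.

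To pass from pointwise finiteness to compact convergence, I would exploit the identity $\|x\|'=\max_i \|\zeta_i(x)\|$, which is immediate from the definitions of $\|\cdot\|'$ and of the Fueter basis. Given any compact $K\subset B'(0,1/\sigma)$, I would pick $r$ with $\sup_{x\in K}\|x\|'<r<1/\sigma$ and take $h=r(i+j+k)$, so that $\|\zeta_1(h)\|=\|\zeta_2(h)\|=\|\zeta_3(h)\|=r$. Then every $x\in K$ satisfies $\|\zeta_i(x)\|\leq r=\|\zeta_i(h)\|$ for $i=1,2,3$, and Lemma \ref{Aux} applied to $h$ (with $N(f)(h)<\infty$ coming from the previous paragraph) delivers uniform convergence of $f$ on $K$.

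The only genuinely non-routine step is matching the $\|\cdot\|'$-ball $B'(0,1/\sigma)$ with the polydisc-shaped regions supplied by Lemma \ref{Aux}; this is handled cleanly by the identity $\|x\|'=\max_i\|\zeta_i(x)\|$, which is precisely why the auxiliary norm was set up as a maximum of Euclidean norms in the first place. Everything else reduces to a one-variable root test on the real coefficients $c_n$, made possible because the multiplicative bound \eqref{Bound} has already absorbed the non-commutative combinatorics of the words in $A_\nu$ into the multinomial factor $\binom{n}{n_1,n_2,n_3}$.
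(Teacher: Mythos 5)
Your proposal is correct and follows essentially the same route as the paper: both majorize $N(f)$ via \eqref{Bound4} by the real series $\sum_n c_n(\|x\|')^n$ and reduce to a one-variable root test, then pass to compact convergence through Lemma \ref{Aux}. The only differences are cosmetic --- you cite the classical Cauchy--Hadamard theorem where the paper re-runs the root-test estimate with $\theta=\sqrt{\|x\|'\sigma}$ to get a geometric tail bound $\theta^n$, and you make explicit (via the choice $h=r(i+j+k)$ and the identity $\|x\|'=\max_i\|\zeta_i(x)\|$) the appeal to Lemma \ref{Aux} that the paper leaves implicit.
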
\begin{proof}

The case  $\sigma = 0$ is follows from  Lemma (\ref{All}). Assuming $\sigma\neq 0,$ let  $x\in B^\prime(0,\frac{1}{\sigma})$ and let $\theta=\sqrt{||x||^\prime \sigma}<1,$ then \[\frac{\theta}{||x||^\prime}=\frac{\sigma}{\theta}>\sigma,\] we conclude that there is $N_0\in \mathbb{N}$ such that for all $n>N_0:$ \[(\sum_{\stackrel{\nu=(n_1,n_2,n_3)}{n_1+n_2+n_3=n}} \binom{n}{n_1,n_2,n_3} \|a_{\nu}\|/n!)^{\frac{1}{n}}\leq \frac{\theta}{||x||^\prime}.\]

 Then for  $n>N_0$ we can substitute on \eqref{Bound4}: 
\begin{eqnarray*}
 N(\sum_{\stackrel{\nu=(n_1,n_2,n_3)}{n_1+n_2+n_3=n}} P_{\nu}a_\nu )(x)&\leq& \frac{(||x||^{\prime })^{n}}{n!}\sum_{\stackrel{\nu=(n_1,n_2,n_3)}{n_1+n_2+n_3=n}}   \binom{n}{n_1,n_2,n_3} \|a_\nu\|\\
&\leq& ||x||^{\prime n}\frac{\theta^n}{||x||^{\prime n}}\\
&=& \theta^{n}.
\end{eqnarray*}
We conclude that \[N(f)(x)\leq N(\sum_0^{N_0} \sum_{\stackrel{\nu=(n_1,n_2,n_3)}{n_1+n_2+n_3=n}} P_\nu a_\nu)(x)+1/(1-\theta).\]
\end{proof}

 In the next theorem we compute the radius by considering the maximum absolute value of the coefficients multiplied by the number of non zero coefficients.

We introduce \[\tau=\limsup_n( \sum_{\stackrel{\nu=(n_1,n_2,n_3)}{|\nu|=n,  a_\nu \neq0}}  \binom{n}{n_1, n_2, n_3})^{1/n},\] 

For polynomials we compute $\tau=0$.  Any holomorphic function is an example of a series with $\tau=1$. The hyperholomorphic function (\ref{Sample}) has $\tau=3$, which is  the maximum value of $\tau$.
\begin{theorem}
(Weak Quaternionic Cauchy-Hadamard Theorem) \label{Cfinite} \label{CH}
Let \[\rho =\limsup_{n\rightarrow \infty} (\max_{||\nu||=n}||a_\nu/n!||)^{\frac{1}{n}}, 0\leq\rho<\infty\] and \[\tau=\limsup_n( \sum_{\stackrel{\nu=(n_1,n_2,n_3)}{|\nu|=n,  a_\nu \neq0}}  \binom{n}{n_1, n_2, n_3})^{1/n}\] then  \[f(x)=\sum_0^\infty \sum_{\stackrel{\nu=(n_1,n_2,n_3)}{n_1+n_2+n_3=n}} P_\nu a_\nu,\] converges compactly for all $h\in B^\prime(0,\frac{1}{\tau \rho})$.
\end{theorem}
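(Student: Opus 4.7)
The plan is to mimic the $\limsup$ argument used for Theorem~\ref{Final}, but with a coarser split of the bound~\eqref{Bound4}. Specifically, one factors the maximum coefficient norm out of the weighted multinomial sum:
\[
N\!\Bigl(\sum_{|\nu|=n} P_\nu a_\nu\Bigr)(x)\;\le\;(\|x\|')^{n}\cdot\max_{|\nu|=n}\bigl\|a_\nu/n!\bigr\|\cdot\!\!\sum_{\substack{|\nu|=n\\ a_\nu\neq 0}}\binom{n}{n_1,n_2,n_3}.
\]
The three factors have $n$-th roots bounded above by $\|x\|'$, $\rho$ and $\tau$ respectively, so the product should behave like $(\|x\|'\rho\tau)^{n}$ for large $n$, matching the claimed radius $1/(\tau\rho)$.

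Before running the main estimate I would dispose of the degenerate cases. If $\rho=0$ the conclusion is $B'(0,\infty)=\mathbb{H}$ and follows directly from Lemma~\ref{All}. If $\tau=0$, then because every nonzero multinomial coefficient is at least $1$, the very definition of $\tau$ forces $a_\nu=0$ for all but finitely many $\nu$; hence $f$ is a hyperholomorphic polynomial and trivially converges everywhere. One may therefore assume $\rho\tau>0$.

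For the main case, fix $x$ with $r=\|x\|'<1/(\tau\rho)$ and choose $\delta>1$ small enough that $\delta^{2}r\rho\tau<1$. By the definitions of $\rho$ and $\tau$ there exists $N_0$ such that for all $n>N_0$,
\[
\max_{|\nu|=n}\|a_\nu/n!\|\le(\delta\rho)^{n},\qquad \sum_{\substack{|\nu|=n\\ a_\nu\neq 0}}\binom{n}{n_1,n_2,n_3}\le(\delta\tau)^{n}.
\]
Substituting both into the displayed bound shows the $n$-th block of $N$ is at most $(\delta^{2}r\rho\tau)^{n}$, the general term of a convergent geometric series; summing over $n$ yields $N(f)(x)<\infty$. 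To upgrade pointwise finiteness to compact convergence on $B'(0,1/(\tau\rho))$, I would cover any compact $K\subset B'(0,1/(\tau\rho))$ by finitely many Fueter polydisks $\{x:\|\zeta_i(x)\|\le\|\zeta_i(h)\|\}$ with $\|h\|'<1/(\tau\rho)$, on each of which Lemma~\ref{Aux} gives uniform convergence.

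No serious obstacle is expected: the only mildly delicate ingredient is the $\delta$-perturbation that simultaneously absorbs the two independent $\limsup$'s into a single uniform geometric bound, and this is exactly parallel to the $\theta=\sqrt{\|x\|'\sigma}$ trick employed in the proof of Theorem~\ref{Final}.
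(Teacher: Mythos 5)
Your proposal is correct and follows essentially the same route as the paper: both bound the $n$-th homogeneous block via \eqref{Bound4}, factor it as $(\|x\|')^n\cdot\max_{|\nu|=n}\|a_\nu/n!\|\cdot\sum_{a_\nu\neq0}\binom{n}{n_1,n_2,n_3}$, and absorb the two $\limsup$'s into a geometric bound (your $\delta>1$ with $\delta^2 r\rho\tau<1$ plays exactly the role of the paper's $\theta=\sqrt{\|x\|'\tau\rho}$), finishing with Lemma \ref{Aux}. Your explicit treatment of the $\tau=0$ case is a small point the paper's proof glosses over, but otherwise the arguments coincide.
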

\begin{proof} 
Lemma (\ref{All}) considers the case $\rho=0$. 
Let  $x\in B^\prime(0,\frac{1}{\tau\rho})$ and let $\theta=\sqrt{||x||^\prime \tau\rho}<1,$ then \[\frac{\theta}{\tau||x||^\prime}=\frac{\rho}{\theta}>\rho,\] we conclude that there is $N_0\in \mathbb{N}$ such that for all $n>N_0$ \[||a_\nu/n!||^{\frac{1}{n}}\leq \frac{\theta}{\tau||x||^\prime},\,\, ||\nu||=n.\]
Since \[\frac{\tau}{\theta^{1/2}}>\tau,\]
we can find  $M_0>0$ so that for all $M>M_0:$ \[ \sum_{\stackrel{\nu=(n_1,n_2,n_3)}{|\nu|=n,  a_\nu \neq0}}  \binom{n}{n_1, n_2, n_3}<(\frac{\tau}{\theta^{1/2}})^M.\]

 Then  for  $n>\max\{N_0, M_0\}:$
\begin{eqnarray*}
  N(\sum_{\stackrel{\nu=(n_1,n_2,n_3)}{n_1+n_2+n_3=n}} P_{\nu}a_\nu )(x)&\leq&(||x||^{\prime })^{n} \sum_{\stackrel{\nu=(n_1,n_2,n_3)}{n_1+n_2+n_3=n}}   \binom{n}{n_1,n_2,n_3}\frac{\|a_\nu\|}{n!} \\
  &\leq& \sum_{a_\nu\neq 0, |\nu|=n}\binom{n}{n_1,n_2,n_3} ||x||^{\prime n}{}\frac{\theta^n}{||x||^{\prime n}\tau^n}\\
&=& (\tau/\theta^{1/2})^n(\theta/\tau)^n\\
&=& \theta^{n/2}.
\end{eqnarray*}
We conclude that \[N(f)(x)\leq N(\sum_0^{N_0} \sum_{\stackrel{\nu=(n_1,n_2,n_3)}{n_1+n_2+n_3=n}} P_\nu a_\nu)(x)+1/(1-\sqrt{\theta}).\]
\end{proof}

\section{Comparison of radius of convergence.}
\label{Comparison}

In the previous section we showed that our radius  does describe a region when the Taylor series expansion converges. In this section we will compare out methods with the ones on the literature.

Using [\cite{Malonek1990}, page 9], we need to compute  \[ \rho_1 = \lim_{n \rightarrow \infty} ((\max_{\|\nu\|=n}{\ |a_\nu\|})^{1/n})^{-1}\] to guarantee that $f$  converges on $\{x\in\mathbb{H}|\,\,\|x\|<\rho_1\}$.

For example given the function

\begin{eqnarray}\label{Sample}\sum_0^\infty\sum_{|\nu|=n} n! P_\nu&=&\sum_0^\infty\sum_{|\nu|=n} \sum_{(i_1,\cdots,i_n)\in A_\nu} \zeta_{i_1}\cdots \zeta_{i_n} \end{eqnarray}

\[ \rho_1 = \lim_{n \rightarrow \infty} ((\max_{\|\nu\|=n}{1})^{1/n})^{-1}\] 
Let $q=1/3(i+j+k)$, $\|q\|=1/\sqrt{3}$ but the series diverges on this point:

\begin{eqnarray*}\sum_0^\infty\sum_{|\nu|=n} n! P_\nu(q)&=&\sum_0^\infty\sum_{|\nu|=n} \sum_{(i_1,\cdots,i_n)\in A_\nu} \zeta_{i_1}\cdots \zeta_{i_n}(q)\\
&=&\sum_0^\infty\sum_{|\nu|=n} \sum_{(i_1,\cdots,i_n)\in A_\nu} 1/3^n\\
&=&\sum_0^\infty\sum_{|\nu|=n, \nu=( n_1, n_2, n_3)} \binom{n}{ n_1, n_2, n_3} 1/3^n\\
&=&\sum_0^\infty (3/3)^n
\end{eqnarray*} 

The reason is that we also need to count how many terms are non zero. This motivated the definition of  $\tau$ in Theorem \ref{CH}. We compute  $\rho = 1/3$ for this series.

Following [\cite{Holo2014}, page 168], we use the expression :

\[\sum_0^\infty \sum_{\stackrel{(n_0n_1,n_2,n_3)}{n_0+n_1+n_2+n_3=n}} |x_0^{j_0}x_1^{j_1}x_2^{j_2}x_3^{j_3}|\|a_{(n_0n_1,n_2,n_3)}\|
\]
to compute  the formula 
\begin{eqnarray}\label{r2}
\rho_2 &=& \lim_{n \rightarrow \infty} (\sum_{\|(v_0,v_1,v_2,v_3)\|=n}{\|a_{(v_0,v_1,v_2,v_3)}}\|^{1/n})^{-1}
\end{eqnarray}

We  consider a complex analytic function and we decompose it in the real variables, to  obtain a series $ f(z)=\sum z^n a_n= \sum_n \sum \binom{n}{s}x^{n-s}y^{s}(-i)^sa_n$, then using  
\begin{eqnarray}\lim_{n \rightarrow \infty} ((\sum_{s=0}^n\|\binom{n}{s}(-i)^s a_n\|)^{1/n})^{-1}&=&\lim_{n \rightarrow \infty} ((2^n\| a_n\|)^{1/n})^{-1}\\
&=&1/2\lim_{n \rightarrow \infty} ((\| a_n\|)^{1/n})^{-1}.
\end{eqnarray} 

 The inequalities $|x|<\|z\|, |y|<\|z\|$ require to consider all binomial terms on $(x+iy)^n$ and this is the source of the $2^n$ coefficient.
Since the norm is multiplicative we can avoid the coefficient 1/2 by just considering $\|a_nz^n\|=\|a_n\|\|z\|^n$.

In the quaternionic case we the variable $x_0+ix_1+jx_2+kx_3$ is not  hyperholomorphic. Instead we have the decomposition on the $P_\nu$  polynomials. When adding the coefficients of the real variables instead of the coefficients of the Fueter variables, the coefficients $a_\nu$ always contribute $2^n$ times as
\[\zeta_{i_1}\cdots \zeta_{i_n}=  (x_{i_1}-i_0 x_{i_0})\cdots (x_{i_n}-i_nx_{i_0})=\sum \prod c_I x_{I_0}\cdots x_{I_n}\] contains $2^n$ monomials of real variables.

Similarly to the complex case, a radius that depends on a decomposition on real variables is not the maximal radius of convergence due to this $2^n$ coefficient. 

In our norm we consider

\begin{eqnarray*}
\|\zeta_{i_1}\cdots \zeta_{i_n}\|&=&\|\zeta_{i_1}\|\cdots \|\zeta_{i_n}\|.
\end{eqnarray*}

Since we stick to the Fueter variables our radius only considers once copy of $a_\nu$ on the calculation.

\label{Basis}

Our norm returns a radius $\rho$ so that a region of the form $B^\prime(\rho)=\{x|\|x\|^\prime<\rho\}$ is contained in the region of convergence.  Different basis can be used to obtain a Taylor expansion of a hyperholomorphic function, see for example \cite{Alpay2011}. 
  In terms of Theorem \ref{CH},  if we use a different basis we will vary the parameters $\tau$ and $ \rho$. This happens because in the new basis we will obtain a rotated $\|\cdot\|^\prime$ ball and that region should be contained in the maximal region of convergence. The radius $\rho$ will decrease or increase accordingly. 
  
  For example, let $ \sum(x_i-kx_j)^n=\sum(\zeta_i-\zeta_j k)^n.$ This series converges on a `tubular' region of those quaternions with $\|x_i+kx_j\|<1$. Note that $B^\prime(0,1)$ contains points outside of $\|x_i+kx_j\|<1$ as $.9i+.9j$. If we write $\sum(x_i+kx_j)^n$  in the Fueter basis then the number of coefficients $a_{\vec{\nu}}$ increases and so does $\tau$, allowing $B^\prime(0,1/\rho\tau)$ to be contained in $\|x_i+kx_j\|<1.$ 
  
 The expression $ (\zeta_i+\zeta_j k)^n $ contains terms of the form $\zeta_ik\zeta_j, $ which are not hyperholomorphic. Thus, by only expanding $\sum (\zeta_i+\zeta_j k)^n $ will not lead to the hyperholomorphic expression. For example the correct hyperholomorphic expression of $(\zeta_i+\zeta_j k)^2$ is $ \zeta_i^2-\zeta_j^2+(\zeta_i\zeta_j+\zeta_j\zeta_i)k$.

\subsection{Examples of domains}

It is important to work with open domains.  As any holomorphic function induces a hyperholomorphic function;  the series $\sum\zeta_1^nn^n+2$ converges only on the plane $j\mathbb{R}+k\mathbb{R},$ where it has  the constant value 2.

Consider $\sum \zeta_1^n a_n +$ $\sum \zeta_2^n b_n+$ $\sum  \zeta_3^n c_n$  with
 $s_1 =\limsup_{k\rightarrow \infty} (||a_k||)^{\frac{1}{k}}, $ $s_2 =\limsup_{k\rightarrow \infty} (||b_k||)^{\frac{1}{k}}, $ $s_3 =\limsup_{k\rightarrow \infty} (||c_k||)^{\frac{1}{k}};$  
then Theorem \ref{CH} guarantees that $f(x)=\sum \zeta_1^n a_n + \zeta_2^n b_n+  \zeta_3^n c_n$ convergences on $B^\prime(0,\frac{1}{s}), s=\max\{s_1,s_2,s_3\}$. On the other hand, Theorem \ref{Aux} give us a bigger domain of convergence \[\{ ||x_0+ix_1||<\frac{1}{s_1},||x_0+jx_2||<\frac{1}{s_2},||x_0+kx_3||<\frac{1}{s_3}\}.\]

Here is an example when the domain of convergence of the function is exactly a poly-cylinder $f(x)=\sum \zeta_1^{2^n}  + \zeta_2^{2^n} +  \zeta_3^{2^n} $.
And here is an example when the radius of convergence is not rational: let's consider $\nu_k=(4k, k,  k)$, then using Stirling formula we obtain 
\[\lim_{k\rightarrow \infty}{\binom{6k}{  4k, k,  k}}^{\frac{1}{6k}}=\frac{3}{2^{\frac{1}{3}}},\]
and so  
 the series $\sum_{ k=0}^{ \infty} P_{\nu_k}$ has radius of convergence  $2^{1/3}/3$.

\section{Octonions}\label{O}
 The non commutativity of Quaternions set the shape of the Taylor expansions.   We used the fact that the norm $\|\cdot\|$ is multiplicative to bound the Taylor expansion in terms of the norm $\|\cdot\|^\prime$, which is based on the Fueter's basis. We never worried about commutativity since we worked all the time with the norms of the coefficients and the terms $\|\zeta_s\|^\prime$. 

The only real finite normed division algebras are $R, C, H, O$. There is a Taylor expansion of functions in terms of the Fueter basis for Octonion hyperholomorphic functions. Since we have a multiplicative norm in the Octonions we can carry out our proofs without having to worry about non associativity, the Taylor expansion of the functions is shaped by the non associative product, but then we will only work with the norms of the coefficients.

We consider $O$ as the real vector space generated by $e_0, e_1,\cdots, e_7$, with multiplication rule \cite{Octo3}

\[
{ e_{i}e_{j}={\begin{cases}e_{j},&{\text{if }}i=0\\e_{i},&{\text{if }}j=0\\-\delta _{ij}e_{0}+\varepsilon _{ijk}e_{k},&{\text{otherwise}}\end{cases}}}\]
where ${ \delta _{ij}} \delta _{ij}$ is the Kronecker delta and ${ \varepsilon _{ijk}} \varepsilon _{ijk}$ is a completely antisymmetric tensor with value +1 when ijk = 123, 145, 176, 246, 257, 347, 365.

\begin{definition}\cite{Octo2} A function $h:O\rightarrow O$ is said to be (left)  $O$ -analytic in a neighborhood $V$ of the origin, if $h$ is real differentiable on $V$ and if
$D_Of=0$ when $D_O$ is the Octonion Cauchy-Riemann-Fueter operator $D_O= \sum_0^7 e_i \frac{\partial}{\partial x_i}.$
\end{definition}

In \cite{Octo2} they prove 
 the following Taylor expansion for $O$-analytic functions:

\begin{eqnarray*}
h(x)&=&\sum_{n=0}^\infty \sum_{\stackrel{\nu=(n_1,n_2,n_3,n_4,n_5,n_6,n_7)}{n_1+n_2+n_3+n_4+n_5+n_6+n_7=n}} V_\nu c_\nu,\end{eqnarray*}
 where 
\[V_\nu=\frac{1}{n!}\sum_{(i_1,\cdots,i_n)} ((\cdots(((\zeta_{i_1}\zeta_{i_2})\zeta_{i_2})\cdots )\zeta_{i_n}),\] 
where $\zeta_i=(x_i-e_ix_0)$ and the sum is over  is the set of all possible words with  $n_1$ letters $1$, $ n_2$ letters $2$, etc.  Note that an order of multiplication for the $\zeta$ functions is fixed.

\begin{definition}
We denote by $||\,\cdot\,||_O^\prime:\mathbb{V}\rightarrow \mathbb{R}$ the norm 
\[||v_0+\sum e_i v_i||_O^\prime=\max_{i\in \{1,\cdots,7\}}\{||v_i-e_iv_0||\}.\]\end{definition}

We compute
\begin{eqnarray}
\hbox{ volume }(B^\prime(0,r))&=&2^8\int^r_{t=0}(r^2-t^2)^{7/2}dt\\
&=&r^8*35\pi\\
&=&24*35/\pi^3
\hbox{ volume }(B(0,r))\\
\end{eqnarray}

It turns out that we can prove the equivalent to the main theorems on this paper with the same techniques as the previous sections by replacing occurrences of $\binom{n}{n_1,n_2,n_3}$ with $\binom{n}{n_1,n_2,n_3,n_4,n_5,n_6,n_7}$.

\begin{theorem} (Octonion Abel Theorem)\label{AbelO} Suppose that there are constants $r_0, M\in \mathbb{R}, N_0\in \mathbb{R},$ such that for all $n>N_0$ and multi indexes $\nu$ with $||\nu||=n;$ we have the bound $||c_\nu||r^n_0\leq M$. Under this hypothesis  the series \begin{eqnarray}\label{FuncO}\label{FunctionO}
h(x)&=&\sum_0^\infty \sum_{\stackrel{\nu=(n_2,n_3)}{n_2+n_3=n}} V_\nu c_\nu,\end{eqnarray}  converges compactly on $\{x|\,\,\,||x||_O^\prime<r_0\}.$
\end{theorem}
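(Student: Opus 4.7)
The plan is to execute the proof of Theorem~\ref{Abel} verbatim with the replacements advertised before the statement: trinomial coefficients $\binom{n}{n_1,n_2,n_3}$ are replaced by heptanomial coefficients $\binom{n}{n_1,\ldots,n_7}$, and $\|\cdot\|^\prime$ is replaced by $\|\cdot\|_O^\prime$. Non-associativity of $O$ does not enter, since the multiplicativity $\|ab\|=\|a\|\|b\|$ of the octonion norm is independent of associativity, so for any bracketing
\[
\bigl\|((\cdots((\zeta_{i_1}\zeta_{i_2})\zeta_{i_3})\cdots)\zeta_{i_n})(x)\bigr\|=\prod_{k=1}^n\|\zeta_{i_k}(x)\|\leq (\|x\|_O^\prime)^n,
\]
the last inequality being immediate from $\|\zeta_i(x)\|=\sqrt{x_i^2+x_0^2}\leq \|x\|_O^\prime$.

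First I would establish the octonionic analog of \eqref{Bound4}. Counting words in $A_\nu$ gives $|A_\nu|=\binom{n}{n_1,\ldots,n_7}$, so
\[
N\Bigl(\sum_{|\nu|=n} V_\nu c_\nu\Bigr)(x)\leq \frac{(\|x\|_O^\prime)^n}{n!}\sum_{|\nu|=n}\binom{n}{n_1,\ldots,n_7}\|c_\nu\|.
\]
Next, for $x$ with $\|x\|_O^\prime=r<r_0$ and $n>N_0$, applying $\|c_\nu\|\leq M/r_0^n$ together with the heptanomial identity $\sum_{|\nu|=n}\binom{n}{n_1,\ldots,n_7}=7^n$ yields
\[
N\Bigl(\sum_{|\nu|=n} V_\nu c_\nu\Bigr)(x)\leq \frac{r^n}{n!}\cdot\frac{M}{r_0^n}\cdot 7^n\leq M\frac{7^n}{n!}.
\]
Summing $n$ from $N_0+1$ to $\infty$ and using $\sum_n 7^n/n!=e^7$ gives
\[
N(h)(x)\leq N\Bigl(\sum_0^{N_0}\sum_{|\nu|=n} V_\nu c_\nu\Bigr)(x)+Me^7<\infty
\]
on the set $\{x:\|x\|_O^\prime<r_0\}$.

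Finally, compact convergence follows from the octonionic analog of Lemma~\ref{Aux}, which is the same Weierstrass $M$-test argument: each compact subset of $\{x:\|x\|_O^\prime<r_0\}$ is contained in a polydisc $\{x:\|\zeta_i(x)\|\leq r,\ i=1,\ldots,7\}$ with $r<r_0$, and $N(h)$ is finite at the distinguished boundary point of that polydisc. The only step requiring a moment's thought is the product-of-norms estimate for $V_\nu$, where one must note that the left-associated bracketing used to define $V_\nu$ is irrelevant because the octonion norm is multiplicative for any pairing; this is precisely the observation emphasized in the introductory paragraph of Section~\ref{O}.
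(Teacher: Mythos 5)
Your proposal is correct and follows exactly the route the paper intends: the paper gives no separate proof for the octonionic theorems, stating only that they follow from the quaternionic arguments by replacing $\binom{n}{n_1,n_2,n_3}$ with $\binom{n}{n_1,\ldots,n_7}$, which is precisely what you carry out (including the resulting $7^n/n!$ bound and $Me^7$ tail). Your explicit remark that the left-associated bracketing in $V_\nu$ is harmless because the octonion norm is multiplicative regardless of pairing is the same observation the paper makes at the start of Section~\ref{O}.
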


\begin{theorem}
(Octonion Cauchy-Hadamard Theorem) 

Let \[\sigma=\limsup_n(\sum_
{\stackrel{\nu=(n_1,n_2,n_3,n_4,n_5,n_6,n_7)}{\sum n_i=n}}
 \binom{n}{\nu}\|\frac{c_{\nu}}{n!}\|)^{1/n}\] then  \eqref{FuncO} converges compactly for all $h\in \{x|\,\,\,||x||_1^\prime<\frac{1}{\sigma}\}$.
\end{theorem}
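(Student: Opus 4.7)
The plan is to mirror the proof of Theorem \ref{Final} (the quaternionic Cauchy-Hadamard theorem) almost verbatim, since the only two ingredients of that proof are the multiplicativity of the norm on the algebra and the combinatorial counting of words in $A_\nu$. Both ingredients transfer to the octonionic setting: the octonion norm $\|\cdot\|$ is multiplicative (non-associativity is irrelevant at the level of norms), and the set of words of type $\nu=(n_1,\ldots,n_7)$ now has cardinality $\binom{n}{n_1,\ldots,n_7}$.

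First I would establish the octonionic analog of the bound \eqref{Bound4}. Using $\|\zeta_{i_1}\cdots\zeta_{i_n}\|=\|\zeta_{i_1}\|\cdots\|\zeta_{i_n}\|\le (\|x\|_O^\prime)^n$ (which is legal despite non-associativity because we work purely with norms) and summing over $A_\nu$, we get
$$N\bigl(\sum_{|\nu|=n} V_\nu c_\nu\bigr)(x)\;\le\;\frac{(\|x\|_O^\prime)^n}{n!}\sum_{|\nu|=n}\binom{n}{n_1,\ldots,n_7}\|c_\nu\|.$$
This is the octonionic replacement for the master inequality used throughout the quaternionic section.

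Second, I would repeat the limsup device from Theorem \ref{Final}. Fix $x$ with $\|x\|_O^\prime<1/\sigma$ and set $\theta=\sqrt{\|x\|_O^\prime\,\sigma}<1$, so that $\theta/\|x\|_O^\prime>\sigma$. By definition of $\sigma$ there exists $N_0$ such that for $n>N_0$,
$$\Bigl(\sum_{|\nu|=n}\binom{n}{n_1,\ldots,n_7}\|c_\nu/n!\|\Bigr)^{1/n}\le \frac{\theta}{\|x\|_O^\prime}.$$
Substituting into the inequality above gives $N(\sum_{|\nu|=n}V_\nu c_\nu)(x)\le \theta^n$, and summing a geometric series in $n>N_0$ plus the finitely many early terms yields $N(h)(x)<\infty$. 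Compact convergence of $h$ then follows from the octonionic version of Lemma \ref{Aux}, which is itself a direct Weierstrass $M$-test application and needs no modification.

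The only genuine obstacle is the case $\sigma=0$, which the proof of Theorem \ref{Final} delegated to Lemma \ref{All}. I would first prove the octonionic analog of that lemma with exactly the same trick: if $\limsup_n(\max_{|\nu|=n}\|c_\nu/n!\|)^{1/n}=0$, choose $N_0$ so that $\|c_\nu/n!\|\le 1/(14\|x\|_O^\prime)^n$ for $n>N_0$, and use $\sum_{|\nu|=n}\binom{n}{n_1,\ldots,n_7}=7^n$ to obtain the geometric bound $1/2^n$. With that auxiliary lemma in hand, the $\sigma=0$ case is absorbed and the theorem is complete. No genuinely new idea beyond the quaternionic case is required; only bookkeeping adjustments ($3\to 7$, $\binom{n}{n_1,n_2,n_3}\to\binom{n}{n_1,\ldots,n_7}$).
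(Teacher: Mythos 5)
Your proposal is correct and follows exactly the route the paper intends: the paper gives no separate proof of this theorem, stating only that the quaternionic arguments carry over once $\binom{n}{n_1,n_2,n_3}$ is replaced by $\binom{n}{n_1,\ldots,n_7}$, and your write-up supplies precisely that transfer, including the correct observation that multiplicativity of the octonion norm makes non-associativity harmless and the right constant ($14=2\cdot 7$ in place of $6=2\cdot 3$) in the $\sigma=0$ case.
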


\begin{theorem} ( Weaker Octonion Cauchy-Hadamard Theorem )

  Let \[\rho =\limsup_{k\rightarrow \infty} (\max_{\|\nu||=k}\|c_\nu/n!\|)^{\frac{1}{k}}
\] and \[\tau=\limsup_n(\sum_
{\stackrel{\nu=(n_1,n_2,n_3,n_4,n_5,n_6,n_7)}{\sum n_i=n|c_{\nu}\neq 0}} \binom{n}{\nu} )^{1/n},\] then (\ref{FuncO}) 
converges compactly on $\{x|\,\,\,||x||_O^\prime<\frac{1}{{\tau \rho}}\}$.
\end{theorem}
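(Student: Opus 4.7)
The plan is to run the proof of the weak quaternionic Cauchy-Hadamard theorem (Theorem \ref{CH}) almost verbatim, making three replacements: the trinomial $\binom{n}{n_1,n_2,n_3}$ becomes the heptanomial $\binom{n}{n_1,\dots,n_7}$, the Fueter basis becomes the seven octonionic functions $\zeta_i=x_i-e_ix_0$, and the ball $B^\prime(0,r)$ becomes $\{x:\|x\|_O^\prime<r\}$. The case $\rho=0$ should be peeled off first by citing (or proving in one line) the obvious octonionic analog of Lemma \ref{All}: since $\sum_{|\nu|=n}\binom{n}{n_1,\dots,n_7}=7^n$, the hypothesis $\max_{|\nu|=n}\|c_\nu/n!\|^{1/n}\to 0$ makes the $n$-th block bounded by a term of the form $(7\|x\|_O^\prime\varepsilon_n)^n$ with $\varepsilon_n\to 0$, giving compact convergence everywhere.

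For $\rho\neq 0$, the first preparatory step is the octonionic version of the basic bound \eqref{Bound4}:
\[
N\Bigl(\sum_{|\nu|=n}V_\nu c_\nu\Bigr)(x)\;\leq\;\frac{(\|x\|_O^\prime)^n}{n!}\sum_{|\nu|=n}\binom{n}{n_1,\dots,n_7}\|c_\nu\|.
\]
This rests on two facts: the pointwise estimate $\|\zeta_i(x)\|=\|x_i-e_ix_0\|\leq\|x\|_O^\prime$ (true by the very definition of $\|\cdot\|_O^\prime$), and the multiplicativity of the norm on $O$, which lets us collapse the fixed left-associated product appearing in $V_\nu$ as $\|((\cdots(\zeta_{i_1}\zeta_{i_2})\cdots)\zeta_{i_n})\|=\prod_j\|\zeta_{i_j}\|$. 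The factor $\binom{n}{n_1,\dots,n_7}$ then simply counts words in $A_\nu$.

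With that bound in hand, I reproduce the trick of the quaternionic proof. Fix $x$ with $\|x\|_O^\prime<1/(\tau\rho)$ and set $\theta=\sqrt{\|x\|_O^\prime\,\tau\rho}<1$, so that $\theta/(\tau\|x\|_O^\prime)=\rho/\theta>\rho$ and $\tau/\theta^{1/2}>\tau$. By the definitions of $\rho$ and $\tau$ as limsups, there exists $N_0$ beyond which both inequalities
\[
\|c_\nu/n!\|^{1/n}\leq\frac{\theta}{\tau\|x\|_O^\prime}\quad\text{for }|\nu|=n,\qquad \sum_{\substack{|\nu|=n\\ c_\nu\neq 0}}\binom{n}{n_1,\dots,n_7}\leq\Bigl(\frac{\tau}{\theta^{1/2}}\Bigr)^{n}
\]
hold. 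Substituting into the bound above and restricting the sum to $\nu$ with $c_\nu\neq 0$ causes $\|x\|_O^\prime$ and $\tau$ to cancel, leaving $N(\sum_{|\nu|=n}V_\nu c_\nu)(x)\leq \theta^{n/2}$. Summing the geometric tail and adding the finite head gives $N(f)(x)<\infty$, and compact convergence on $\{\|x\|_O^\prime<1/(\tau\rho)\}$ follows from the octonionic analog of Lemma \ref{Aux} (Weierstrass $M$-test on an $\|\cdot\|_O^\prime$-polydisc).

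The only genuine point to verify, and essentially the main obstacle, is that the multiplicativity $\|ab\|=\|a\|\|b\|$ on $O$ survives the non-associative product in $V_\nu$. This is immediate by induction on the fixed left-parenthesization: if $w_k=((\cdots(\zeta_{i_1}\zeta_{i_2})\cdots)\zeta_{i_k})$ then $\|w_{k+1}\|=\|w_k\,\zeta_{i_{k+1}}\|=\|w_k\|\,\|\zeta_{i_{k+1}}\|$, because multiplicativity on a composition algebra is a property of single binary products and needs no associativity. Once this observation is recorded, the remainder is a transcription of the quaternionic argument with seven indices in place of three.
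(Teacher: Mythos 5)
Your proposal is correct and is exactly the argument the paper intends: the paper gives no separate proof for the octonionic theorems, stating only that they follow from the quaternionic proofs by replacing $\binom{n}{n_1,n_2,n_3}$ with $\binom{n}{n_1,\dots,n_7}$, which is precisely your transcription of the proof of Theorem \ref{CH}. Your explicit inductive check that multiplicativity of the octonion norm survives the fixed left-parenthesization of $V_\nu$ is a detail the paper only gestures at (``we can carry out our proofs without having to worry about non associativity'') and is worth recording.
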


This all depends on having the function written in terms of the basis $\{\zeta_i\}_i$, the election of a different basis  of linear O-analytic polynomials will change the radius of convergence to allow a maximal $S$-bicylinder of convergence as in \ref{Basis}.

\section{Moisil–Th\'eodoresco Basis}\label{MT}
We consider $V\subset \mathbb{H}$ as the vector space generated by $i,j,k$, elements are written as  $v=iv_1+jv_2+kv_3$. We are interested in functions $g:\mathbb{V}\rightarrow \mathbb{H}$ that are real differentiable on $V$ and such that  $D_{MT}g=0$ when $D_{MT}$ is the Moisil–Th\'eodoresco operator \cite{quater}:
\[D_{MT}=i\frac{\partial }{\partial x_1}+j\frac{\partial }{\partial x_2}+k\frac{\partial }{\partial x_3}.\]

When the domain is $\mathbb{H}$, the first coordinate has different algebraic pro\-perty $1^2=1$ than the other coordinates $i^2=j^2=k^2=-1$ and so, to find the power series expansion of a function $f:\mathbb{H}\rightarrow \mathbb{H}$ it is a common  practice to use Fueter's basis, where the real variable has a different role as the other variables. Although any other variable can be selected to generate the corresponding homogeneous polynomials. Now we are working with $V\sim R^3$, where the three variables have the same algebraic properties, so our methods are not motivated by anti-symmetries anymore.

To apply our results we consider that we are working with a hyperholomorpic function that independent of the real coordinate. We consider  the basis: $$\{\xi_2(v)=v_2-\frac{v_1}{i}j, \xi_3(v)=v_3-\frac{v_1}{i}k \}.$$

Below we show that real analytic functions $g$ that satisfy $D_{MT}(g)=0$ on a open neighborhood of the origin can be expanded locally as: 
\begin{eqnarray}\label{FuncMT}
g(x)&=&\sum_{n=0}^\infty \sum^n_{\stackrel{k=0}{\nu=(k,n-k)}} S_\nu b_\nu,\end{eqnarray}  Where for every $\nu$, $n!S_\nu$ is a   polynomial obtained by adding all possible products of $k$ functions $\xi_2$ and $n-k$ functions $\xi_3$, and $b_\nu\in \mathbb{H}$.

Given an infinite differentiable function $g:\mathbb{V}\rightarrow \mathbb{H}$, and a quaternion $h=h_0+ih_1+jh_2+kh_3,$ we formally consider the  series:
$$T(g)(h)=\sum_{n=0}^\infty \frac{1}{n!}(h_1\frac{\partial}{\partial x_1}+h_2\frac{\partial}{\partial x_2}+h_3\frac{\partial}{\partial x_3})^nf|_{(0)}.$$

Since $D_{MT}(g)=0$:
 $$\frac{\partial}{\partial x_1}f=(-i^{-1}j\frac{\partial }{\partial x_2}-i^{-1}k\frac{\partial }{\partial x_3})f$$  and we can rewrite the $n$-derivative as:
$$ \frac{1}{n!}\left((h_2-i^{-1}jh_1)\frac{\partial}{\partial x_2}+(h_3-i^{-1}kh_1)\frac{\partial}{\partial x_3}\right)^nf|_{(0)}.$$

Given $\nu=(s,n-s)$ let $B_\nu $ be the set of all possible vectors with $s$ numbers $2$ and $n-s$ numbers $3$. Then

\begin{eqnarray*}   \lefteqn{\frac{1}{n!}\left(\xi_2(h)\frac{\partial}{\partial x_2}+\xi_3(h)\frac{\partial}{\partial x_3}\right)^n\!\!\!f|_{(0)}\!\!\!=}\\
&=&\!\!\!\frac{1}{n!}\sum_{|\nu|=n}\!\sum_{\,\vec{\nu}=(i_1,\cdots,i_n)\in B_\nu}\!\!\!\!\!\! \!\!\xi_{i_1}\cdots \xi_{i_n}b_{{\nu}}\\
&=& \sum S_\nu b_\nu.
\end{eqnarray*}

\begin{definition}
We denote by $||\,\circ\,||_1^\prime:\mathbb{V}\rightarrow \mathbb{R}$ the norm 
\[||iv_1+jv_2+kv_3||_1^\prime=\max\{||v_2-i^{-1}jv_1||,||v_3-i^{-1}kv_1||\}.\]\end{definition} 
The balls determined by $||\,\circ\,||_1^\prime$ are still bigger than the euclidean ones, in fact they are bicylinders as in figure
\ref{Fig:mesh1} \footnote{ Image made by Ag2gaeh - Own work, CC BY-SA 4.0, https://commons.wikimedia.org/w/index.php?curid=63519897 .}.
\begin{figure}[ht]
    \centering
    \includegraphics[width=0.25\textwidth]{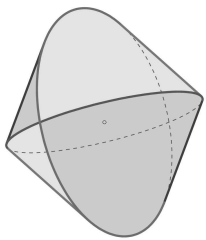}
    \caption{Bicylinder}
    \label{Fig:mesh1}
\end{figure}

It turns out that we can prove the equivalent to the main theorems on this paper with the same techniques as the previous sections by replacing occurrences of $\binom{n}{n_1,n_2,n_3}$ with $\binom{n}{n_1}$.

\begin{theorem} (Abel Theorem)\label{AbelMT} Suppose that there are constants $r_0, M\in \mathbb{R}, N_0\in \mathbb{R},$ such that for all $n>N_0$ and multi indexes $\nu$ with $||\nu||=n;$ we have the bound $||b_\nu||r^n_0\leq M$. Under this hypothesis  the series \begin{eqnarray}\label{FuncMTS}\label{FunctionMT}
g(x)&=&\sum_0^\infty \sum_{\stackrel{\nu=(s,t)}{s+t=n}} S_\nu b_\nu,\end{eqnarray}  converges compactly on $\{x|\,\,\,||x||_1^\prime<r_0\}.$
\end{theorem}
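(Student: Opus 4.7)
The plan is to mirror the proof of the Quaternionic Abel Theorem (Theorem~\ref{Abel}), transferring it from the three Fueter variables $\zeta_1,\zeta_2,\zeta_3$ to the two Moisil--Th\'eodoresco variables $\xi_2,\xi_3$. The essential inputs are (i) the analog of the pointwise bound \eqref{Bound} in the form $\|\xi_i(x)\| \leq \|x\|_1^\prime$ for $i\in\{2,3\}$, which is built directly into the definition of $\|\cdot\|_1^\prime$, and (ii) a Weierstrass $M$-test lemma in the spirit of Lemma~\ref{Aux} that lifts finiteness of $N(g)$ to compact convergence of $g$ itself.

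First I would establish the Moisil--Th\'eodoresco analog of the bound \eqref{Bound4}. Since $\|\cdot\|$ is multiplicative on $\mathbb{H}$, the same chain of inequalities used in \eqref{Bound2} gives $\|\xi_{i_1}\cdots \xi_{i_n}(x)\| \leq (\|x\|_1^\prime)^n$, and the word-set $B_\nu$ for $\nu=(s,n-s)$ has exactly $\binom{n}{s}$ elements, so
\[
N\!\left(\sum_{\nu=(s,t),\,s+t=n} S_\nu b_\nu\right)\!(x) \;\leq\; \frac{(\|x\|_1^\prime)^n}{n!}\sum_{s=0}^n \binom{n}{s}\,\|b_{(s,n-s)}\|.
\]

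Next, for $x$ with $\|x\|_1^\prime = r < r_0$ and $n > N_0$, the hypothesis $\|b_\nu\|\,r_0^n \leq M$ turns the estimate above into
\[
N\!\left(\sum_{\nu} S_\nu b_\nu\right)\!(x) \;\leq\; \frac{M}{n!}\Bigl(\frac{r}{r_0}\Bigr)^{\!n}\sum_{s=0}^{n}\binom{n}{s} \;\leq\; M\,\frac{2^n}{n!},
\]
because $\sum_{s=0}^n \binom{n}{s}=2^n$ and $(r/r_0)^n<1$. Summing over $n$ yields
\[
N(g)(x) \;\leq\; N\!\left(\sum_{n=0}^{N_0}\sum_{s+t=n} S_\nu b_\nu\right)\!(x) \;+\; Me^2,
\]
which is finite on $\{\|x\|_1^\prime<r_0\}$. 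Compact convergence of $g$ then follows from the Moisil--Th\'eodoresco analog of Lemma~\ref{Aux}, proved by the same Weierstrass $M$-test argument as in \cite{Malonek1990}, with the Fueter basis replaced by $\{\xi_2,\xi_3\}$.

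The main obstacle is essentially bookkeeping rather than conceptual: one must check that $\|\xi_i(x)\|\leq\|x\|_1^\prime$ really is encoded in the maximum defining $\|\cdot\|_1^\prime$ (the factor $i^{-1}$ in $\xi_i$ carries unit norm, so this works cleanly), and that the Weierstrass-style lemma transfers verbatim once one replaces $\zeta_i$ by $\xi_i$ and the three-index multinomial by the binomial. Modulo these routine checks the proof is structurally identical to that of Theorem~\ref{Abel}, with the trinomial $\binom{n}{n_1,n_2,n_3}$ replaced by the binomial $\binom{n}{s}$ and the tail bound $Me^3$ replaced by $Me^2$.
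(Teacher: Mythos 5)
Your proposal is correct and follows exactly the route the paper intends: the paper gives no separate proof for this theorem, stating only that the Quaternionic arguments carry over with $\binom{n}{n_1,n_2,n_3}$ replaced by $\binom{n}{s}$, and your write-up fills in precisely those details (the bound $\|\xi_i(x)\|\leq\|x\|_1^\prime$, multiplicativity of the norm, the $2^n$ binomial sum, the $Me^2$ tail, and the Weierstrass $M$-test lemma). No discrepancies to report.
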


\begin{theorem}
(Cauchy-Hadamard Theorem) 

Let \[\sigma=\limsup_n(\sum_{s=0}^n \binom{n}{s}\|b_{(s,n-s)}/n!\|)^{1/n}\] then  \eqref{FuncMTS} converges compactly for all $h\in \{x|\,\,\,||x||_1^\prime<\frac{1}{\sigma}\}$.
\end{theorem}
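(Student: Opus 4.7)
The plan is to mirror the proof of Theorem \ref{Final} almost verbatim, replacing the trinomial coefficient $\binom{n}{n_1,n_2,n_3}$ with the binomial coefficient $\binom{n}{s}$ throughout, as the paper itself suggests. The two ingredients I will need are (i) a Moisil-Th\'eodoresco version of the homogeneous bound \eqref{Bound4}, and (ii) a Moisil-Th\'eodoresco version of Lemma \ref{Aux}; both are immediate from the same arguments used earlier.

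First, I would record the basic estimate: by definition of $\|\cdot\|_1^\prime$, both $\|\xi_2(x)\|\leq \|x\|_1^\prime$ and $\|\xi_3(x)\|\leq \|x\|_1^\prime$, so using the multiplicativity of the standard quaternionic norm,
\[\|\xi_{i_1}\cdots\xi_{i_n}(x)\|=\|\xi_{i_1}(x)\|\cdots\|\xi_{i_n}(x)\|\leq (\|x\|_1^\prime)^n.\]
Since $B_\nu$ has exactly $\binom{n}{s}$ words for $\nu=(s,n-s)$, this yields, in direct analogy with \eqref{Bound4},
\[N\left(\sum_{\stackrel{\nu=(s,t)}{s+t=n}} S_\nu b_\nu\right)(x)\leq \frac{(\|x\|_1^\prime)^n}{n!}\sum_{s=0}^n \binom{n}{s}\|b_{(s,n-s)}\|.\]

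Second, handle $\sigma=0$ via the obvious MT analogue of Lemma \ref{All} (choose $N_0$ so that $(\max_{|\nu|=n}\|b_\nu/n!\|)^{1/n}\leq 1/(4\|x\|_1^\prime)$, bound the degree-$n$ sum by $1/2^n$). For $\sigma\neq 0$ and $x$ with $\|x\|_1^\prime<1/\sigma$, set $\theta=\sqrt{\|x\|_1^\prime\sigma}<1$. Then $\theta/\|x\|_1^\prime=\sigma/\theta>\sigma$, so by definition of $\limsup$ there exists $N_0$ such that for all $n>N_0$,
\[\left(\sum_{s=0}^n \binom{n}{s}\|b_{(s,n-s)}/n!\|\right)^{1/n}\leq \frac{\theta}{\|x\|_1^\prime}.\]
Substituting into the bound above yields $N(\sum_{|\nu|=n} S_\nu b_\nu)(x)\leq \theta^n$, and summing the geometric tail gives
\[N(g)(x)\leq N\left(\sum_{n=0}^{N_0}\sum_{|\nu|=n} S_\nu b_\nu\right)(x)+\frac{1}{1-\theta}<\infty.\]

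Third, finiteness of $N(g)$ on the open set $B^\prime_1(0,1/\sigma):=\{x\mid \|x\|_1^\prime<1/\sigma\}$ implies compact convergence via the MT analogue of Lemma \ref{Aux}: on any compact $K\subset B^\prime_1(0,1/\sigma)$, pick $h\in B^\prime_1(0,1/\sigma)$ with $\|\xi_2(h)\|,\|\xi_3(h)\|$ dominating the corresponding quantities on $K$, then apply the Weierstrass $M$-test with the convergent majorant $N(g)(h)$. The main (modest) obstacle is simply being careful that $\theta=\sqrt{\|x\|_1^\prime\sigma}$ is strictly less than $1$ for every $x$ in the open ball so the geometric sum converges, and that the bound is uniform on compact subsets so that $M$-test applies; both follow because $\|\cdot\|_1^\prime$ is continuous and attains a maximum on any compact subset of $B^\prime_1(0,1/\sigma)$ strictly less than $1/\sigma$.
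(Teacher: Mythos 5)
Your proposal is correct and follows exactly the route the paper intends: the paper gives no separate proof for this theorem, stating only that the argument of Theorem \ref{Final} carries over with $\binom{n}{n_1,n_2,n_3}$ replaced by $\binom{n}{s}$, and your write-up is precisely that substitution carried out (with the right adjustment $1/(4\|x\|_1^\prime)$ in the $\sigma=0$ case so that $2^n/4^n=1/2^n$). The only difference is that you supply more detail than the paper does, particularly on the compact-convergence step via the Weierstrass $M$-test.
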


\begin{theorem} ( Weaker Cauchy-Hadamard Theorem )

  Let \[\rho =\limsup_{k\rightarrow \infty} (\max_{\|\nu||=k}\|b_\nu/n!\|)^{\frac{1}{k}}
\] and \[\tau=\limsup_n(\sum_{0\leq s\leq n| b_{s, n-s}\neq 0} \binom{n}{s} )^{1/n},\] then (\ref{FuncMTS}) 
converges compactly on $\{x|\,\,\,||x||_1^\prime<\frac{1}{{\tau \rho}}\}$.
\end{theorem}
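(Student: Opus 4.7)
The plan is to mirror the proof of Theorem \ref{CH} almost verbatim, making three systematic substitutions: the multi-index $\nu=(n_1,n_2,n_3)$ is replaced by $\nu=(s,n-s)$, the trinomial coefficient $\binom{n}{n_1,n_2,n_3}$ by the binomial $\binom{n}{s}$, and the Fueter auxiliary norm $\|\cdot\|^\prime$ by $\|\cdot\|_1^\prime$. First I would establish the MT-analog of bound \eqref{Bound4}. Since by definition $\|\xi_\ell(v)\|\leq \|v\|_1^\prime$ for $\ell\in\{2,3\}$ and since the quaternion norm is multiplicative, $\|\xi_{i_1}\cdots\xi_{i_n}\|=\|\xi_{i_1}\|\cdots\|\xi_{i_n}\|\leq (\|v\|_1^\prime)^n$; combined with $|B_\nu|=\binom{n}{s}$ for $\nu=(s,n-s)$, this yields
\[N\Bigl(\sum_{|\nu|=n} S_\nu b_\nu\Bigr)(x) \;\leq\; \frac{(\|x\|_1^\prime)^n}{n!}\sum_{s=0}^{n} \binom{n}{s}\,\|b_{(s,n-s)}\|.\]

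Next I would dispose of the degenerate case $\rho=0$ by the MT-analog of Lemma \ref{All}, obtained by the same substitution scheme. For the main case with $\rho,\tau>0$, I fix $x$ with $\|x\|_1^\prime < 1/(\tau\rho)$ and set $\theta:=\sqrt{\|x\|_1^\prime\,\tau\rho}\in(0,1)$. The identities $\theta/(\tau\|x\|_1^\prime)=\rho/\theta>\rho$ and $\tau/\theta^{1/2}>\tau$ produce integers $N_0,M_0$ so that for all $n>N_0$ and $\|\nu\|=n$ one has $\|b_\nu/n!\|^{1/n}\leq \theta/(\tau\|x\|_1^\prime)$, while for all $n>M_0$ one has $\sum_{b_{(s,n-s)}\neq 0}\binom{n}{s}\leq (\tau/\theta^{1/2})^n$.

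Substituting these two estimates into the preceding inequality, and keeping only the indices with $b_{(s,n-s)}\neq 0$, gives for $n>\max\{N_0,M_0\}$
\[N\Bigl(\sum_{|\nu|=n} S_\nu b_\nu\Bigr)(x) \;\leq\; (\|x\|_1^\prime)^n\Bigl(\frac{\tau}{\theta^{1/2}}\Bigr)^n\Bigl(\frac{\theta}{\tau\|x\|_1^\prime}\Bigr)^n \;=\; \theta^{n/2},\]
so the tail over $n$ is a convergent geometric series of ratio $\sqrt{\theta}<1$. Hence $N(g)(x)<\infty$, and compact convergence of the series on $\{x:\|x\|_1^\prime<1/(\tau\rho)\}$ follows from the Weierstrass $M$-test exactly as in Lemma \ref{Aux}.

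The main point to check carefully, rather than the algebra which is a transcription of the Fueter argument, is that the two ingredients genuinely descend to this two-variable setting: the multiplicativity $\|\xi_{i_1}\cdots\xi_{i_n}\|=\prod\|\xi_{i_k}\|$ uses the usual quaternion norm and is immediate because each $\xi_\ell(v)$ is a quaternion; and the MT-analog of Lemma \ref{Aux}, by which finiteness of $N(g)$ yields compact convergence, follows at once by the same Weierstrass $M$-test. The rest of the argument is a mechanical rewriting of the proof of Theorem \ref{CH} with the substitutions listed above.
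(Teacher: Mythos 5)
Your proposal is correct and follows exactly the route the paper intends: the paper gives no separate proof for the Moisil--Th\'eodoresco case, stating only that the main theorems carry over ``with the same techniques'' after replacing $\binom{n}{n_1,n_2,n_3}$ by $\binom{n}{s}$, which is precisely the substitution scheme you carry out. Your verification of the two nontrivial ingredients --- that $\|\xi_\ell(v)\|\leq\|v\|_1^\prime$ by definition of the max norm and that multiplicativity of the quaternion norm gives the analog of \eqref{Bound4} --- is exactly what is needed to justify the paper's claim.
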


This all depends on having the function written in terms of the basis $\{\xi_2(v)=v_2-\frac{v_1}{i}j, \xi_3(v)=v_3-\frac{v_1}{i}k \}$, the election of a different basis $S=\{S_a, S_b\}$ of linear polynomials that satisfy $D_{MT}S_t=0, t=a,b,$ will change the radius of convergence to allow a maximal $S$-bicylinder of convergence as in \eqref{Basis}.

\section{Future work}
The norm $\|\cdot\|^\prime$  may give better estimations of error approximation by Taylor series. 

Is $\rho$  the radius of the  maximal $\|\cdot\|^\prime$ ball contained on the corresponding domain of convergence?
Closely related to the previous question, note that due to the combinatorial definition of $P_\nu$, and the non commutativity of Quaternions, there may be a way to bound the possible values of $P_\nu$ based only on the norm of the input and the coefficients $\nu$. Perhaps this will avoid the coefficient $3^n$ in the Quaternionic case or the coefficient $7^n$ in the Octonion case.

It would be useful to have an explicit relationship between the region calculated using equation \eqref{r2} and our region.

\section{Acknowledgements} 
The author would like to thank  Dr. Michael  Shapiro for many useful discu\-ssions and valuable comments, as well as
the support from  Dr. Jose Mendoza-Cortez. I am responsible for mistakes still present in this version. The author would like to thank the anonymous reviewer for the constructive feedback.

The author thank CONACyT for providing the 
undergraduate Fellowship PIFI  project cgip 20070253.

\bibliography{main.bib}{}
\bibliographystyle{plain}

\end{document}